\newtheorem{theorem}{Theorem}[section]
\newtheorem{claim}[theorem]{Claim}
\newtheorem{remark}[theorem]{Remark}
\newcommand{\cl}{\operatorname{cl}}
\title{Linked orbits of homeomorphisms of the plane and Gambaudo-Kolev Theorem}
\begin{document}
\maketitle
\begin{center}J. P. Boro\'nski\footnote{Faculty of Mathematics and Computer Science, Jagiellonian University in Krak\'ow, ul. Łojasiewicza 6, 30-348 Kraków, Poland -- and -- National Supercomputing Centre IT4Innovations, Division of the University of Ostrava, Institute for Research and Applications of Fuzzy Modeling, 30. dubna 22, 70103 Ostrava, Czech Republic, e-mail: jan.boronski@osu.cz}
\end{center}
\begin{center}
{\small {\it Dedicated to the memory of my dad, Wojtek Boroński (1947-2021), \\
who taught me how to solve linear equations in one variable.}}
\end{center}
\begin{abstract}
Let $h : \mathbb{R}^2 \to  \mathbb{R}^2$ be an orientation preserving homeomorphism of the plane. For any bounded orbit $\mathcal{O}(x)=\{h^n(x):n\in\mathbb{Z}\}$ there exists a fixed point $p\in\mathbb{R}^2$ of $h$ linked to $\mathcal{O}(x)$ in the sense of Gambaudo: one cannot find a Jordan curve $C\subseteq\mathbb{R}^2$ around  $\mathcal{O}(x)$, separating it from $p$, that is isotopic to $h(C)$ in $\mathbb{R}^2\setminus\left(\mathcal{O}(x)\cup\{p\}\right)$. 
\end{abstract}

\section{Introduction}
Given a set $D$ by $\cl(D),\operatorname{int}(D)$, and $\partial D$ we shall denote the closure, interior and boundary of $D$ respectively. $\mathbb{S}^n$ denotes the $n$-dimensional sphere, and $\mathbb{R}^2$ denotes the plane. Given an orientation preserving homeomorphism $h:\mathbb{R}^2\to\mathbb{R}^2$ the {\itshape orbit} of $x$ is given by $\mathcal{O}(x)=\{h^n(x):n\in\mathbb{Z}\}$. 
The present paper is concerned with the existence of fixed points of orientation preserving homeomorphisms of $\mathbb{R}^2$ linked to bounded orbits, in the sense introduced by J.-M. Gambaudo in  \cite{Gambaudo}. Let $\mathcal{O}_1$ and $\mathcal{O}_2$ be two sets invariant for a homeomorphism $h$. Following \cite{Gambaudo} we say that these two sets are {\it unlinked} if there exist two discs $D_1,D_2\subset \mathbb{R}^2$ with the following properties:
\begin{itemize}
    \item $\mathcal{O}_i\subset \operatorname{int}(D_i)$ for i=1,2;
    \item $D_1\cap D_2=\emptyset$;
    \item $h(\partial D_i)$ is isotopic to $\partial D_i$ in $\mathbb{R}^2\setminus\left(\mathcal{O}_1\cup\mathcal{O}_2\right)$, for $i=1,2$.
\end{itemize}
Note that if $\cl(\mathcal{O}_1)\cap\mathcal{O}_2\neq\emptyset$ then $\mathcal{O}_1$ and $\mathcal{O}_2$ are linked in a trivial way, as there is no Jordan curve around $\mathcal{O}_2$ separating the two sets. Gambaudo showed that for any $C^1$-embedding $f$ of a disk, and any periodic orbit $O$ of $f$, there exists a fixed point $p$ linked to $O$. In consequence, for the torus flow $\phi_t$ suspending $f$, the sets $\{\phi_t(O)\}_{\geq 0}$ and $\{\phi_t(p)\}_{\geq 0}$ are linked as knots in $\mathbb{S}^3$. A similar result was obtained simultanously by B. Kolev in \cite{Kolev}, who showed linking of a periodic orbit to a fixed point for orientation preserving $C^1$-diffeomorphisms of $\mathbb{R}^2$. The result of Gambaudo and Kolev was generalized to orientation reversing homeomorphisms of $\mathbb{S}^2$ by M. Bonino \cite{Bo2}, who showed linking of periodic orbits of period at least $3$, to periodic orbits of least period $2$. Bonino also pointed out that Kolev's proof, in the orientation preserving case, works in $C^0$ as well, as it is enough to perturb a given homeomorphism slightly, by smoothing it out in a small neighborhood of the periodic orbit and then apply the same proof. In the present paper we improve on the results of Gambaudo and Kolev, by proving that {\it any} bounded orbit is linked to a fixed point. 
\begin{theorem}\label{main1}
Let $h:\mathbb{R}^2 \to  \mathbb{R}^2$ be an orientation preserving homeomorphism. For any bounded orbit $\mathcal{O}(x)$ there exists a fixed point $p\in\mathbb{R}^2$ linked to $\mathcal{O}(x)$. 
\end{theorem}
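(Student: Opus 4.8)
The plan is to locate the desired fixed point \emph{inside} a suitable hull of the orbit, where it is linked for a trivial reason, and to produce it there by a Brouwer-type fixed point theorem. Write $K=\cl(\mathcal{O}(x))$, a nonempty compact set with $h(K)=K$, and let $\hat K$ be the union of $K$ with all bounded components of $\mathbb{R}^2\setminus K$, so that $\mathbb{R}^2\setminus\hat K$ is the (connected) unbounded component of $\mathbb{R}^2\setminus K$. Then $\hat K$ is compact, and since $h$ preserves $K$ and sends bounded complementary components to bounded complementary components, $h(\hat K)=\hat K$.

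\emph{Step 1 (fixed points in the hull are linked).} I claim any fixed point $x'\in\hat K$ is linked to $\mathcal{O}(x)$. If $C$ is a Jordan curve with $\mathcal{O}(x)\subseteq\operatorname{int}(C)$, then $K\subseteq\cl(\operatorname{int}(C))$, and since $\operatorname{int}(C)$ is a Jordan domain it swallows every bounded component of $\mathbb{R}^2\setminus K$; hence $\hat K\subseteq\operatorname{int}(C)$, so $x'\in\operatorname{int}(C)$ and $C$ does not separate $\mathcal{O}(x)$ from $x'$. Thus no separating Jordan curve exists at all, and $x'$ is linked to $\mathcal{O}(x)$ vacuously. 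So it suffices to produce a fixed point of $h$ in $\hat K$. (Two framing remarks: if $\mathcal{O}(x)$ already meets $\mathrm{Fix}(h)$ then $x$ is fixed, $\mathcal{O}(x)=\{x\}\subseteq\hat K$, and $x'=x$ works; and if $h$ has no fixed point, Brouwer's plane translation theorem forces every orbit to leave every compact set, contradicting boundedness of $\mathcal{O}(x)$.)

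\emph{Step 2 (the case $K$ connected).} If $K$ is connected then $\hat K=K\cup\bigcup_i\cl(W_i)$, the union over the bounded complementary components $W_i$ of $K$, each $\cl(W_i)$ being connected and meeting $K$; hence $\hat K$ is a continuum. Since $\mathbb{R}^2\setminus\hat K$ is connected, $\hat K$ is a non-separating invariant continuum for the orientation-preserving homeomorphism $h$, so by the Cartwright--Littlewood fixed point theorem $h$ has a fixed point in $\hat K$, and Step 1 concludes.

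\emph{Step 3 (the general case and the main obstacle).} When $K$ is disconnected, $\hat K$ need not be a continuum and Cartwright--Littlewood does not apply to it directly; this is exactly the content beyond Gambaudo--Kolev (which already handles periodic $\mathcal{O}(x)$). The approach is to trap, inside $\hat K$, either a \emph{connected} non-separating invariant continuum or a periodic orbit, and run the previous argument on it: if $\omega(x)$ or $\alpha(x)$ is connected, apply Step 2 to its hull (a non-separating invariant continuum contained in $\hat K$); otherwise analyse how $h$ permutes the components of $\hat K$ (equivalently of $\omega(x)$), pass to a minimal subset $M\subseteq\omega(x)$ and to an iterate $h^{n}$ fixing a component $Z$ of $M$, and either find a periodic orbit of $h$ in $\hat K$ — to which Gambaudo--Kolev (valid in $C^0$ for orientation-preserving maps by Bonino's remark) assigns a linked fixed point, which by Step 1 is then linked to $\mathcal{O}(x)$ as well — or apply Cartwright--Littlewood to $h^{n}$ on the non-separating $h^{n}$-invariant continuum $Z$ and bootstrap to a fixed point of $h$ in $\hat K$. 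The hard part is precisely this bookkeeping: showing that a bounded orbit with badly disconnected, possibly aperiodically permuted closure must still trap a non-separating invariant continuum (or a periodic orbit) on which a Brouwer-type fixed point theorem can be applied; once that is in hand, Steps 1--2 make everything formal.
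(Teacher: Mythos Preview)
Your Step~1 is correct, and Step~2 handles the connected case cleanly via Cartwright--Littlewood. The genuine gap is in Step~3, and it is not a matter of missing bookkeeping: the strategy of producing a fixed point \emph{inside} $\hat K$ is false in general. Take the flagship case the theorem is meant to cover: $K=\cl(\mathcal O(x))$ is a Cantor set on which $h$ acts as an odometer (an aperiodic minimal homeomorphism). A Cantor set does not separate the plane, so $\hat K=K$; and $K$ contains no fixed points, no periodic orbits, and no nondegenerate continua at all. Your minimal subset $M$ equals $K$, its components are singletons, and no iterate $h^n$ fixes any of them. So there is nothing in $\hat K$ to feed into Cartwright--Littlewood or into Gambaudo--Kolev. (Even when a periodic orbit $P\subset\hat K$ does exist, your transfer ``by Step~1'' is not justified: Gambaudo--Kolev gives a fixed point linked to $P$, but that point need not lie in $\hat K$, and Step~1 says nothing about such points.)

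The point you are missing is that in the nonseparating case the linked fixed point typically lies \emph{outside} $\hat K$, and the linking is not the vacuous separation-obstruction of Step~1 but the failure of the isotopy condition $h(\partial D_1)\simeq\partial D_1$ in $\mathbb R^2\setminus(\mathcal O(x)\cup\{x'\})$. Your argument never touches that isotopy clause. The paper's proof engages it directly: assuming every $p\in\operatorname{Fix}(h)$ admits an unlinking curve, one lifts $h$ to the universal cover $\tilde U$ of the component $U$ of $\mathbb R^2\setminus\operatorname{Fix}(h)$ containing $K$, chooses the lift $\tilde h$ fixing a sheet $\tilde K$ over $K$ at two consecutive orbit points, and then uses the unlinking curves (projected through an intermediate infinite cyclic cover) to show $\tilde K$ is $\tilde h$-invariant. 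Since $\tilde h$ is an orientation-preserving, fixed-point-free homeomorphism of $\tilde U\cong\mathbb R^2$ with a compact invariant set, this contradicts Brouwer's translation theorem.
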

{\bf Outline of Proof. }
\begin{itemize}{\it
\item  Suppose no point in $\operatorname{Fix}(h)$ is linked to the compact $K=\cl\mathcal{O}(x)$. 
\item Then there exists a lift $\tilde{h}$ of $h$ to the universal cover $(\tau,\tilde{U})$ of an open surface $U$ (which is the component of $\mathbb{R}^2\setminus\operatorname{Fix}(h)$ containing $K$), and a (compact) sheet $\tilde{K}$ over $K$, such that $\tilde{h}(\tilde K)=\tilde K$. This step is possible since $h(U)=U$, as a consequence of a result of Brown and Kister \cite{BM2}. 
\item Since $\tilde{h}$ is orientation preserving and fixed point free, and $\tilde K$ is compact, we obtain a contradiction with Brouwer Translation Theorem.
\item To show that $\tilde{K}$ is invariant under $\tilde{h}$, one projects $\tilde{K}$ from the universal cover onto an infinite cyclic covering space, to realize that $\tilde K$ is $\tilde{h}$-invariant unless a fixed point of $h$ linked to $K$ already exists.}
\end{itemize}

Our proof is inspired by M. Brown's paper \cite{BM}, the writing of \cite{Boronski}, and motivated by \cite{Kuperberg} where Brown's approach was used to show existence of periodic points in neighborhoods of adding machines in the plane. Unlike the proofs of results in \cite{Bo2}, \cite{Gambaudo} and \cite{Kolev}, our proof does not employ any elements of the Nielsen-Thurston theory, but purely topological arguments concerning covering spaces of open surfaces. We believe that this proof reveals a deeper explanation of the phenomenon described by the result of Gambaudo and Kolev.  
Our result is applicable not only to periodic orbits, but also to non-periodic bounded orbits, and can be used, for example, to guarantee fixed points linked to an invariant Cantor set $C=\cl(\mathcal{O}(x))$. This is particularly useful when $h|C$ is minimal\footnote{$h|C$ is said to be minimal if the orbit of every point is dense in $C$}, or more generally aperiodic\footnote{$h|C$ is aperiodic if $h|C$ has no periodic orbits}, as then $C$ does not contain any fixed points. It is easy to find examples of local dynamics of $h$ for which $h$ may be such that the result of Kolev and Gambaudo does not guarantee a linked fixed point, but the above theorem does. In particular this may occur for a disk $D\subseteq \mathbb{R}^2$ such that $h|\partial D$ is conjugate to a circle homeomorphism with irrational rotation number (minimal or Denjoy-type), or $h|C$ is an odometer with $C\subseteq D$. However, the set of minimal homeomorphisms of the Cantor set is very rich, and contains homeomorphisms with any prescribed topological entropy, even topologically mixing \cite{Lehrer}, and so potential applications go far beyond these two examples\footnote{Note that any Cantor set homeomorphism extends to a homeomorphism of $\mathbb{R}^2$ \cite[Chapter 13, Theorem 7, p. 93]{Moise}.}. Moreover, our result also applies to orbits whose closures might be connected sets, even separating plane continua. Finally, our theorem can also be used to detect linking of one-dimensional invariant sets of suspension flows in $\mathbb{S}^3$, more general than just knots considered in \cite{Gambaudo}, such as 1-dimensional {\it matchbox manifolds}; i.e. the class of compact connected metrizable spaces in which every point has a neighborhood homeomorphic to the product $[0,1]\times C$. This class of spaces includes many familiar examples from the dynamics literature, such as Denjoy exceptional minimal sets of flows \cite{Schweitzer}, solenoids \cite{Smale}, and DA-attractors \cite{Katok}; see e.g. \cite{CHL} and \cite{Fokkink} to learn more. 
\section{Proof of Theorem \ref{main1}}
Suppose $\mathcal{O}_1=\{h^n(x):n\in\mathbb{Z}\}$ is an orbit and $\mathcal{O}_2=\{p\}$ is a fixed point of $h$, such that $p\notin \cl(\mathcal{O}_1)$. If $D_2$ is a sufficiently small disk containing $\mathcal{O}_2$ then $\partial D_2$ is always isotopic to $h(\partial D_2)$\footnote{Note that this shows that this form of linking is not well suited for homeomorphisms of $\mathbb{S}^2$, since there any two Jordan curves separating $\mathcal{O}_1$ and $x'$ are isotopic in $\mathbb{S}^2\setminus(\mathcal{O}_1\cup\{x'\})$. This contrasts with the case discussed in \cite{Bo2}, when $\mathcal{O}_2$ is of least period $2$, in which case this form of linking is meaningful.}. Therefore for linking of orbits to fixed points, Gambaudo's definition reduces to the following condition. 
\begin{itemize}
    \item $\mathcal{O}_1$ and $p$ are {\it linked} if there does not exist a closed disk $D_1$ with $\mathcal{O}_1\subset\operatorname{int}(D_1)$, such that $p\notin D_1$ and $\partial D_1$ is isotopic to $h(\partial D_1)$ in $\mathbb{R}^2\setminus\left(\mathcal{O}_1\cup\{p\}\right)$.
\end{itemize}
If such a Jordan curve $\partial D_1$ does exist, and consequently $\mathcal{O}_1$ and $p$ are unlinked, then we shall call $\partial D_1$ an {\it unlinking} of  $\mathcal{O}_1$ and $p$. 

Recall that if $U$ is an open surface and $(\tilde{U},\tau)$ is its universal covering space then given a homeomorphism $h:U\to U$ there exists a {\itshape lift} homeomorphism $\tilde{h}:\tilde{U}\to \tilde{U}$ such that the following diagram commutes.
\[
\begin{CD}
\tilde{U}@>\tilde{h}>> \tilde{U} \\
@V\tau VV @V\tau VV \\
U @>h>>U
\end{CD}
\]
Additionally if $h(x)=y$ then $\tilde{h}$ is uniquely determined by the choice of two points $\tilde{x}\in\tau^{-1}(x)$,  $\tilde{y}\in\tau^{-1}(y)$ and setting $\tilde{h}(\tilde{x})=\tilde{y}$. If $D\subset U$ is a disk (or its subset), then $\tau^{-1}(D)$ consists of pairwise disjoint homeomorphic copies of $D$ in $\tilde{U}$, called {\it sheets}. We shall need the following celebrated result of Brouwer. Brouwer's theorem with its subsequent generalizations is much stronger, but we shall only need the weaker version stated below. The reader is referred to \cite{Slaminka} for a historical account of the proof of Brouwer's result. 
\begin{theorem}[Brouwer Translation Theorem]\cite{Br}
Let $h : \mathbb{R}^2 \to  \mathbb{R}^2$ be an orientation preserving planar homeomorphism. If there exists an $x\in\mathbb{R}^2$ such that $\mathcal{O}(x)$ is bounded, then there exists a fixed point $p\in\mathbb{R}^2$ of $h$. 
\end{theorem}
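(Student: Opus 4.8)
The statement is the classical Brouwer Plane Translation Theorem in its weak existence form, so the plan is to prove the contrapositive: assuming $h$ is orientation preserving with $\operatorname{Fix}(h)=\emptyset$, I would show that no orbit of $h$ is bounded. The first reduction is to observe that a bounded orbit produces a non-wandering point. Indeed, if $\mathcal{O}(x_0)$ lies in a compact set, then the forward orbit has a limit point $y$, and for every neighborhood $W$ of $y$ one can pick integers $n_1<n_2$ with $h^{n_1}(x_0),h^{n_2}(x_0)\in W$, whence $h^{n_2-n_1}(W)\cap W\neq\emptyset$; thus $y$ is non-wandering. It therefore suffices to prove that a fixed-point-free orientation-preserving homeomorphism has \emph{every} point wandering.

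Second, I would produce translation arcs. Since $\operatorname{Fix}(h)=\emptyset$, the map $z\mapsto |z-h(z)|$ is continuous and strictly positive, so around any point $y$ there is a small disk $V$ with $h(V)\cap V=\emptyset$ (a \emph{free} disk); joining $y$ to $h(y)$ inside $V\cup h(V)$ and correcting the arc if necessary yields a \emph{translation arc} $\alpha$, that is, an arc from $y$ to $h(y)$ with $\alpha\cap h(\alpha)=\{h(y)\}$.

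Third --- and this is the topological heart, \emph{Brouwer's Lemma on translation arcs} --- I would show that the translates $h^n(\alpha)$, $n\in\mathbb{Z}$, are pairwise disjoint apart from the shared endpoints of consecutive ones, so that $\Lambda=\bigcup_{n\in\mathbb{Z}}h^n(\alpha)$ is a properly embedded copy of $\mathbb{R}$. The argument is by contradiction: if $h^n(\alpha)\cap\alpha\neq\emptyset$ for some minimal $n\geq 2$, then a sub-collection of the arcs $\alpha,h(\alpha),\dots,h^{n-1}(\alpha)$ can be spliced into a Jordan curve bounding a disk $D$, and orientation-preservation can be exploited to build a continuous self-map of $D$ (a suitable modification of $h$, or of a power of $h$, supported near $D$) which, by the ordinary finite-dimensional Brouwer fixed point theorem, has a fixed point; tracing this fixed point back produces a point fixed by $h$, contradicting $\operatorname{Fix}(h)=\emptyset$. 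Getting the combinatorics of the returning chain and the precise definition of the auxiliary self-map right is the main obstacle, and this is the step where I would follow Brown's argument in \cite{BM} most closely.

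Finally, $\Lambda$ is a \emph{translation line}: it separates the plane into two topological half-planes, $h(\Lambda)\cap\Lambda=\emptyset$, and $h$ carries one side strictly into itself, so the disjoint translates $h^n(\Lambda)$ leave every compact set as $n\to\pm\infty$. This forces a wandering neighborhood of $y$: a small free disk straddling $\Lambda$ has all its forward and backward images separated from it by the escaping translates. Hence $y$ is wandering, contradicting the non-wandering point produced in the first step, and the contrapositive is established.
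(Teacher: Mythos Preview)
The paper does not prove this statement at all: the Brouwer Translation Theorem is quoted with a citation to \cite{Br} and then used as a black box in the proof of Theorem~\ref{main1}. So there is no ``paper's own proof'' to compare your outline with; your proposal is an independent attempt at a classical result that the author deliberately takes for granted.

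On the merits of your sketch, the reduction in the first two steps (bounded orbit $\Rightarrow$ non-wandering point; construction of a translation arc through a given point) is standard and fine, and Step~3 is indeed Brouwer's lemma on translation arcs. Two points, however, need attention. First, the reference is off: in this paper \cite{BM} is Brown's short proof of the Cartwright--Littlewood theorem, not his proof of Brouwer's translation-arc lemma; the latter is a different paper. Second, and more substantively, the final step is garbled. The set $\Lambda=\bigcup_{n}h^{n}(\alpha)$ is by construction $h$-invariant, so the assertion ``$h(\Lambda)\cap\Lambda=\emptyset$'' is simply false; you are conflating $\Lambda$ with a \emph{Brouwer line}, which is a properly embedded line $\ell$ with $h(\ell)\cap\ell=\emptyset$ and $h(\ell)$, $h^{-1}(\ell)$ in different complementary components. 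Brouwer lines are not obtained just by concatenating translates of a translation arc, and the pairwise disjointness of the $h^{n}(\alpha)$ does not by itself force $\Lambda$ to be properly embedded (the translates could accumulate). The clean way to finish, once you have the translation-arc lemma, is via the free-disk-chain argument (as in Franks' treatment): a non-wandering point yields a free disk $V$ with $h^{n}(V)\cap V\neq\emptyset$ for some $n\ge 2$, and a suitable translation arc built from this return must, by the lemma, force a fixed point. Your outline gestures toward this but the stated mechanism in the last paragraph does not work as written.
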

Now we are ready to prove Theorem \ref{main1}. 
\begin{proof}(of Theorem \ref{main1})
Let $\mathcal{O}(x)$ be a bounded orbit and $p\in\operatorname{Fix}(h)$. If $p\in\cl(\mathcal{O}(x))\cap \operatorname{Fix}(h)$ then $\mathcal{O}(x)$ and $p$ are linked, and so from now on we shall assume that $\cl(\mathcal{O}(x))\cap \operatorname{Fix}(h)=\emptyset$. 
\vspace{0.25cm}

\noindent
{\bf Standing Assumption:} $\cl(\mathcal{O}(x))\cap \operatorname{Fix}(h)=\emptyset$.
\vspace{0.25cm}

\noindent
We  start with the case when $K=\cl(\mathcal{O}(x))$ does not separate $\mathbb{R}^2$, as the case when $K$ separates is easier.
\vspace{0.25cm}

\noindent
{\bf CASE I:} $\cl(\mathcal{O}(x))$ does not separate $\mathbb{R}^2$.
\vspace{0.25cm}

\noindent
By contradiction, suppose that no point in $\operatorname{Fix}(h)$ is linked to $\mathcal{O}(x)$. Let $U$ be the component of $\mathbb{R}^2\setminus \operatorname{Fix}(h)$ that contains $x$. Recall that $h(U)=U$ by \cite{BM2}, and so $K\subset U$. Note that $U$ cannot be simply connected, as otherwise we obtain a contradiction with Brouwer Translation Theorem, since $U$ is then homeomorphic to $\mathbb{R}^2$ and contains a bounded orbit, but no fixed point.
Let $x_n=h^n(x)$ for each $\mathbb{N}$. Let $(\tilde{U},\tau)$ be the universal cover of $U$. Note that $\tilde{U}$ is homeomorphic to $\mathbb{R}^2$ and, since $K$ is contained in a disk disjoint from $\operatorname{Fix}(h)$, $K$ lifts to pairwise disjoint homeomorphic copies of itself (sheets) in $\tilde U$, each of which maps homeomorphically onto $K$ by $\tau$. Let $\tilde K$ be one such a sheet. We have $\tau(\tilde{K})=K$. Let $\tilde{x}=\tau^{-1}(x)\cap \tilde{K}$ and $\tilde{x}_1=\tau^{-1}(x_1)\cap \tilde{K}$. The homeomorphism $h$ lifts to a unique homeomorphism $\tilde h$ such that $\tilde{h}(\tilde{x})=\tilde x_1$. 
\begin{claim}\label{h2}
$\tilde{h}^n(\tilde{x})\in\tilde{K}$ for every $n\in\mathbb{N}$.
\end{claim}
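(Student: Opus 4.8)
The plan is to prove the stronger statement that the sheet $\tilde K$ is fully invariant, $\tilde h(\tilde K)=\tilde K$; the claim then follows at once, since $\tilde x\in\tilde K$ and hence $\tilde h^n(\tilde x)\in\tilde h^n(\tilde K)=\tilde K$ for all $n$. The starting observation is that $h(K)=K$: the orbit $\mathcal{O}(x)$ is $h$-invariant, $h$ is a homeomorphism, and $K=\cl(\mathcal{O}(x))$, so $h(K)=\cl(h(\mathcal{O}(x)))=\cl(\mathcal{O}(x))=K$. Therefore $\tilde h(\tilde K)\subseteq\tau^{-1}(h(K))=\tau^{-1}(K)$, the disjoint union of the sheets over $K$.

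The delicate point is that $K$, and hence every sheet over it, may be disconnected — the Cantor set case is exactly the one we care about — so one cannot simply say that the image $\tilde h(\tilde K)$ lies in a single sheet by appealing to connectedness of $\tilde h(\tilde K)$. To handle this I would transport the question to the disk $D\subseteq U\setminus\operatorname{Fix}(h)$ with $K\subseteq\operatorname{int}(D)$ coming from the setup of Case I, whose sheets \emph{are} connected. Let $\tilde D$ be the sheet of $\tau^{-1}(D)$ containing $\tilde x$; then $\tilde K\subseteq\tilde D$ and $\tilde D$ is connected (homeomorphic to $D$ under $\tau$). The set $h(D)$ is again a disk contained in $U$: it is connected, disjoint from $\operatorname{Fix}(h)$ since $\operatorname{Fix}(h)$ is $h$-invariant, and it meets $U$ because it contains $h(K)=K$. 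Thus $\tau^{-1}(h(D))$ is a disjoint union of sheets, each open in it and mapped homeomorphically onto $h(D)$ by $\tau$. Since $\tilde h(\tilde D)$ is connected and contained in $\tau^{-1}(h(D))$, it lies in a single such sheet $\tilde D'$, and comparing images under $\tau$ (both equal $h(D)$, and $\tau$ is injective on $\tilde D'$) gives $\tilde h(\tilde D)=\tilde D'$. Restricting, $\tilde h(\tilde K)\subseteq\tilde D'\cap\tau^{-1}(K)=:\tilde K'$, which is a single sheet over $K$, and once more comparing images under $\tau$ yields $\tilde h(\tilde K)=\tilde K'$.

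Finally, $\tilde x_1=\tilde h(\tilde x)$ lies in $\tilde h(\tilde K)=\tilde K'$, while by construction $\tilde x_1\in\tilde K$; since distinct sheets over $K$ are pairwise disjoint, $\tilde K'=\tilde K$, i.e.\ $\tilde h(\tilde K)=\tilde K$. Iterating, $\tilde h^n(\tilde K)=\tilde K$ for every $n$, so $\tilde h^n(\tilde x)\in\tilde K$ for all $n\in\mathbb{N}$.

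The main obstacle is the one flagged above: disconnectedness of $K$ blocks the naive "connected image lies in one sheet'' argument, and it is resolved by passing through the evenly covered disks $D$ and $h(D)$, whose sheets are connected, so that $\tilde h$ is forced to carry the disk-sheet $\tilde D\ni\tilde x$ onto a single disk-sheet and hence $\tilde K$ onto a single sheet over $K$. (The proof outline instead routes $\tilde K$ through an infinite cyclic cover; that is an alternative device for controlling the action of $\tilde h$ on sheets, and it is presumably what one falls back on when $K$ cannot be enclosed in such a disk — but in Case I the disk is available and the direct argument above suffices.)
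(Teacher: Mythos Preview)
There is a genuine gap. Your argument never invokes the standing contradiction hypothesis --- that every fixed point of $h$ is \emph{unlinked} from $K$ --- yet this hypothesis is exactly what makes the claim true. Without it the conclusion $\tilde h(\tilde K)=\tilde K$ is simply false, and the step where your proof breaks is the last one: ``since distinct sheets over $K$ are pairwise disjoint, $\tilde K'=\tilde K$.'' The problem is that $\tilde K$ is a sheet over $K$ \emph{with respect to the disk $D$}, whereas $\tilde K'=\tilde D'\cap\tau^{-1}(K)$ is a sheet over $K$ \emph{with respect to the disk $h(D)$}. When $K$ is disconnected there is no canonical sheet decomposition of $\tau^{-1}(K)$: two different enclosing disks can pair up the lifts of the components of $K$ in incompatible ways, so a $D$-sheet and an $h(D)$-sheet can share a point without being equal.

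A concrete model: let $U$ be an annulus, $K=\{a,b\}$ a period-$2$ orbit with $h(a)=b$, $h(b)=a$, and $D$ a small disk around $\{a,b\}$. In the strip $\tilde U$ the $D$-sheets of $K$ are the pairs $\{\tilde a_n,\tilde b_n\}$. Choose the lift $\tilde h$ with $\tilde h(\tilde a_0)=\tilde b_0$; then $\tilde h(\tilde b_0)=\tilde a_m$ where $m$ is the winding number around the hole of the image under $h$ of an arc in $D$ from $a$ to $b$. Nothing prevents $m\neq 0$, in which case $\tilde h(\tilde K)=\{\tilde b_0,\tilde a_m\}\neq\tilde K$ even though $\tilde x_1=\tilde b_0\in\tilde K\cap\tilde h(\tilde K)$. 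Here $\tilde h(\tilde K)$ \emph{is} a single $h(D)$-sheet, exactly as your argument shows, but it is not a $D$-sheet. (In such an example the fixed point in the hole is in fact linked to $K$ --- which is why the paper's claim survives --- but your argument has not used this.)

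The paper's proof is not a stylistic alternative: the passage to the infinite cyclic cover over $\mathbb{R}^2\setminus\{p\}$ and the use of the unlinking curve $L$ are precisely the mechanism that rules out the winding phenomenon above. The nontrivial deck transformation $\sigma$ records the discrepancy between $\tilde h(\tilde K)$ and $\tilde K$, the point $p$ detects it, and the isotopy of $L$ to $h(L)$ in $\mathbb{R}^2\setminus(K\cup\{p\})$ --- available only under the unlinking hypothesis --- is what yields the contradiction.
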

\begin{proof}(of Claim \ref{h2})
By contradiction, suppose that there exists an $N$ such that $\tilde{h}^N(\tilde{x})\notin\tilde{K}$. 
\begin{figure}[h]
		\centering		\includegraphics[width=12cm,height=12cm]{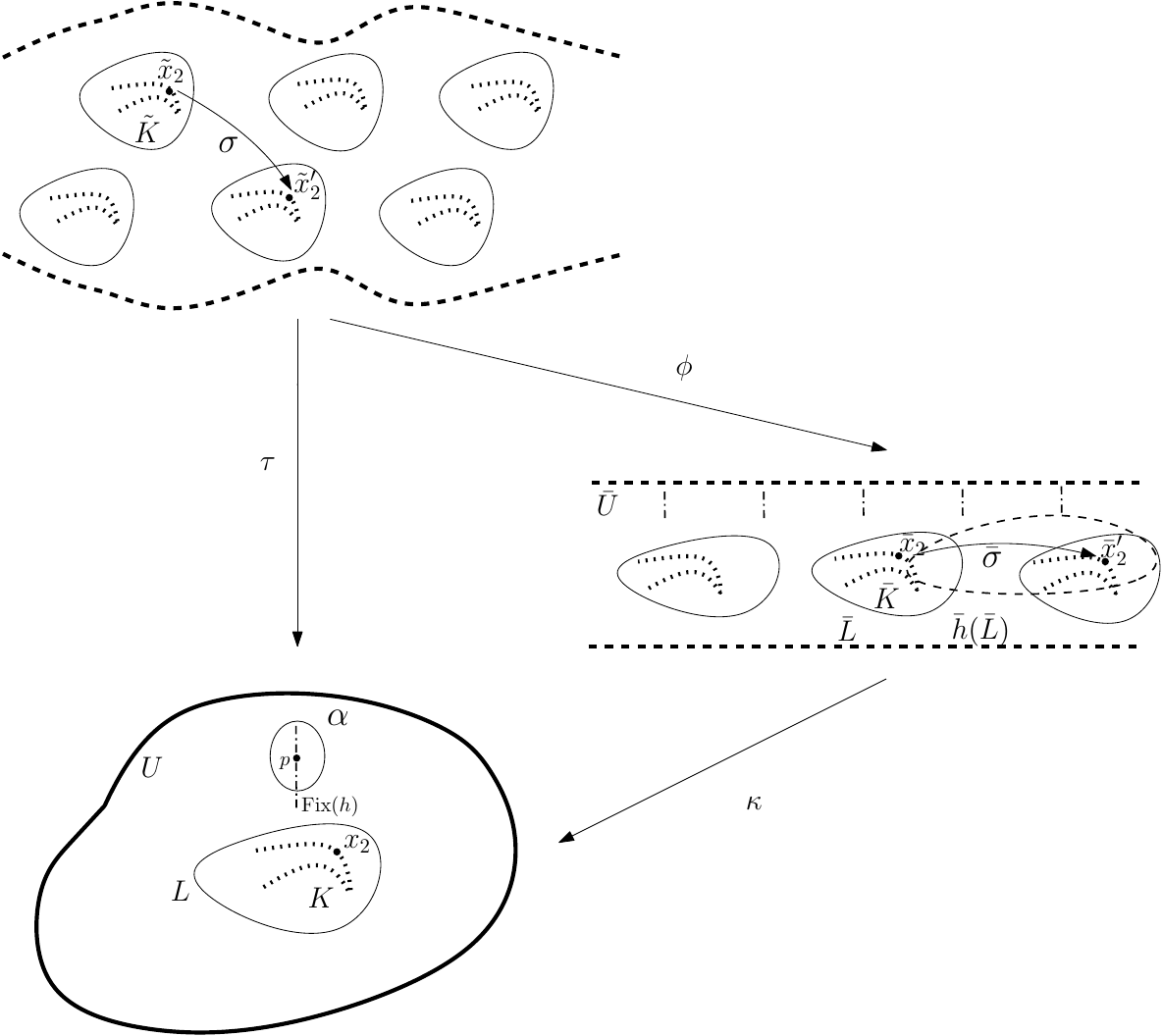}
		\caption{Proof of Theorem \ref{main1}, CASE I: The component $U$ of $\mathbb{R}^2\setminus \operatorname{Fix}(h)$ containing $\cl(\mathcal{O}(x))$ and the covering spaces $\tilde{U}$ and $\bar U$. }
		\label{fig:cover}
	\end{figure}
Without loss of generality we assume that $N=2$. Let $\tilde{x}_2'=\tau^{-1}(x_2)\cap \tilde K$ and $\tilde x_2=\tilde h(\tilde x_1)$. We have $\tilde x_2\in \tilde h(\tilde K)\neq \tilde K$.

Let $\sigma:\tilde{U}\to\tilde{U}$ be the deck transformation such that $\sigma(\tilde x_2')=\tilde x_2$. The deck transformation group is isomorphic to the fundamental group $\pi_1(U)$ of $U$, and one sees $\sigma$ as an element $\alpha$ of $\pi_1(U)$. There exists a point $p\in\operatorname{Fix}(h)$, such that $\alpha$ is a nontrivial loop in the surface $W=\mathbb{R}^2\setminus \{p\}$. Let $(\bar W, \kappa)$ be an infinite cyclic covering space of $W$. Then $(\tilde{U},\tau)$ is also a universal covering space of $\bar U$, the component of $\bar W\setminus \kappa^{-1}(\operatorname{Fix}(h)\setminus\{p\})$ that contains $\kappa^{-1}(K)$, and so there exists a covering map $\phi:\tilde U\to\bar U$ such that $\tau=\kappa|_{\bar U}\circ\phi$. Let $\bar x=\phi(\tilde x), \bar x_1=\phi(\tilde x_1),\bar x_2=\phi(\tilde x_2)$ and $\bar x'_2=\phi(\tilde x'_2)$. By the choice of $\alpha$ we have that the element of the deck transformation group (with respect to $\bar U$) $\bar\sigma$ satisfying $\bar \sigma\circ\phi=\phi \circ \sigma$ is nontrivial, and $\bar \sigma (\bar x_2')=\bar x_2$, so $\bar x_2\neq \bar x_2'$. Let $L$ be an unlinking of $\cl(\mathcal{O}(x))$ and $\{p\}$. Set $\bar K=\phi(\tilde{K})$, and let $\bar L$ be a sheet over $L$ that bounds a disk containing
$\bar K$, and 
$\bar h$ be the lift of $h$ to $\bar W$ given by $\bar h(\bar x)=\bar x_1$. We have $\phi\circ \tilde h=(\bar h|\bar U)\circ\phi$, and $\bar h(\bar x_1)=\bar x_2$. 

Consider an isotopy $\{i_t:\mathbb{S}^1\rightarrow \mathbb{R}^2\setminus\left(\cl(\mathcal{O}(x))\cup\{p\}\right):0\leq t\leq 1\}$ from $i_0(\mathbb{S}^1)=L$ to $i_1(\mathbb{S}^1)=h(L)$. By isotopy lifting property, this isotopy lifts to an isotopy $\tilde{i}_t:\mathbb{S}^1\to\bar W\setminus\kappa^{-1}(\cl(\mathcal{O}(x)))$ taking $\bar{L}$ to $\bar{h}(\bar{L})$, both of which are loops in $
\bar W$ (since $L$ and $h(L)$ are inessential in $\mathbb{R}^2\setminus\{p\}$). This leads to a contradiction, since $\bar{L}$ bounds a disk containing $\bar{x}_2$, but $\tilde{h}(\bar{L})$ does not, so $\tilde{i}_t$ cannot take $\bar{L}$ to $\bar{h}(\bar{L})$ in $\bar W\setminus\kappa^{-1}(\cl(\mathcal{O}(x)))$. This completes the proof of Claim \ref{h2}.
\vspace{0.25cm}
\end{proof}
To conclude the proof of CASE I it is now enough to observe that $\tilde{h}$ is an orientation preserving homeomorphism of the plane $\tilde{U}$, with a compact invariant set $\tilde K$, but no fixed points, contradicting Brouwer Translation Theorem.

\vspace{0.25cm}
\noindent
{\bf CASE II: $\cl(\mathcal{O}(x))$ separates $\mathbb{R}^2$.}
\vspace{0.25cm}

\noindent
If no bounded component $V$ of $\mathbb{R}^2\setminus\cl(\mathcal{O}(x))$ contains a fixed point then we add all such components to $\cl(\mathcal{O}(x))$ and we are back to CASE I. 
\vspace{0.25cm}

\noindent
Otherwise, there exists a bounded component $V_o$ of $\mathbb{R}^2\setminus\cl(\mathcal{O}(x))$ such that $h(V_o)= V_o$ and $V_o$ contains a fixed point $p'$ of $h$. But since $\partial V_o$ separates the plane into at least two components, one of which contains $p'$, and none of which contains $\cl(\mathcal{O}(x))$, there does not exist a disk $D_1$ such that $\cl(\mathcal{O}(x))\subset \operatorname{int}(D_1)$ and $p'\notin D_1$. Consequently there is no unlinking of $\cl(\mathcal{O}(x))$ and $p'$, and these two orbits must be linked. This concludes the proof of CASE II. 

The proof of Theorem \ref{main1} is complete. 
\end{proof}
\begin{remark}
The proof of Theorem \ref{main1} remains valid if the full orbit $\mathcal{O}(x)$ is replaced with one of the half orbits $\mathcal{O}^+(x)=\{h^n(x):n\in\mathbb{N}\}$ or $\mathcal{O}^-(x)=\{h^{-n}(x):n\in\mathbb{N}\}$.
\end{remark}
\section{Final Remarks}
In 1988 J. Franks defined what seems to be a deeper form of linking, for which one requires from a periodic orbit to have a non-zero rotation number around a fixed point. Franks asked whether every periodic orbit of an orientation preserving homeomorphism is linked in that sense to a fixed point \cite{FB}. This difficult open problem was resolved in the affirmative by P. Le Calvez in 2006 \cite{LeCalvez2}. It seems that Le Calvez's result cannot be extended to bounded orbits. A result that seems related to both forms of linking is proven in \cite{Graff}, where a way of locating fixed points in proximity of recurrent orbits is given, by the means of topological hulls of unions of arcs, connecting a finite number of points of an $\epsilon$-periodic orbit; see also \cite{Kucharski} and \cite{Ostrovski}. Sufficient conditions for the nonremovability of collections of periodic points under isotopy relative to a general compact invariant set can be found in \cite{BH}.
\section{Acknowledgments}
I am grateful to Toby Hall for comments that helped to improve the paper, and an anonymous referee who pointed out a necessity to revise the initial proof of the main result. This work was supported in part by the National Science Centre, Poland (NCN), grant no. 2019/34/E/ST1/00237.

\bibliographystyle{alpha}

\begin{thebibliography}{99}
\bibitem{Bo2} {\sc Bonino, M.} {\em Nielsen theory and linked periodic orbits of homeomorphisms of $\mathbb{S}^2$.}  {\bf Math. Proc. Cambridge Philos. Soc.  140}  (2006),  no. 3, 425--430.
\bibitem{Boronski} {\sc  Boro\'nski, J. P.} {\em On a generalization of the Cartwright-Littlewood fixed point theorem for planar homeomorphisms.} {\bf Ergodic Theory Dynam. Systems 37} (2017), 1815--1824.
\bibitem{FB} {\sc Boyland, P.; Franks, J.} {\em Notes on dynamics of surface homeomorphisms}, Informal Lecture Notes, Warwick, 1988
\bibitem{BH} {\sc Boyland, P.; Hall, T.} {\em 
Isotopy stable dynamics relative to compact invariant sets.}
{\bf Proc. London Math. Soc. 79} (1999), 673--693
\bibitem{BM} {\sc Brown, M.}, {\em A Short Short Proof of the Cartwright-Littlewood Theorem.}  {\bf Proc. Amer. Math. Soc., 65} (1977), p. 372. 
\bibitem{BM2} {\sc Brown, M., Kister, J. M.}, {\em Invariance of complementary domains of a fixed point set.} {\bf Proc. Amer. Math. Soc. 91} (1984), no. 3, 503--504.
\bibitem{Br} {\sc Brouwer, L. E.} {\em Beweis des Ebenen Translationssatzes} {\bf Math. Ann. 72} (1912), 36--54.
\bibitem{CL}  {\sc Cartwright, M. L.; Littlewood, J. E.} {\em Some fixed point theorems.} With appendix by H. D. Ursell,
{\bf Ann. of Math. 54}, (1951). 1--37. 
\bibitem{CHL} {\sc Clark, A.; Hurder, S.; Lukina, O.}{\em 
Classifying matchbox manifolds.} 
{\bf Geom. Topol. 23} (2019), no. 1, 1–27.
\bibitem{Fokkink} {\sc Fokkink, R. J.} {\em The structure of trajectories.} Thesis (Ph.D.)–Technische Universiteit Delft (The Netherlands). 1991. 112 pp, ProQuest LLC
\bibitem{Gambaudo} {\sc Gambaudo, J.-M.} {\em 
Periodic orbits and fixed points of a $C^1$ orientation-preserving embedding of $D^2$.}
{\bf Math. Proc. Cambridge Philos. Soc. 108} (1990), no. 2, 307–310.
\bibitem{Graff} {\sc Graff, G; Ruiz-Herrera, A.} {\em A strategy to locate fixed points and global perturbations of ODE's: mixing topology with metric conditions.} {\bf J. Dyn. Diff. Eq. 26} (2014), 93--107
\bibitem{Katok} {\sc Katok, A.; Hasselblatt, B.} {\em Introduction to the modern theory of dynamical systems}. Encyclopedia of Mathematics and its Applications, 54. Cambridge University Press, Cambridge, 1995. xviii+802 pp. ISBN: 0--521--34187--6
\bibitem{Kolev} {\sc Kolev, B.} {\em Point fixe lié à une orbite périodique d'un difféomorphisme de $\mathbb{R}^2$.}  {\bf C. R. Acad. Sci. Paris S\'er. I Math. 310} (1990), 831–833.
\bibitem{Kuperberg} {\sc Kuperberg, K.} {\em Periodic points near an adding machine}, Special Session on Dynamical Systems, The 54th Spring Topology and Dynamics Conference, May 2021. 
\bibitem{Kucharski} {\sc Kucharski, P.} {\em On Cartwright-Littlewood fixed point theorem}, {\bf Indag. Math. 33} (2022), 861--868.
\bibitem{LeCalvez2} {\sc Le Calvez, P.} {\em Why do the periodic points of plane homomorphisms rotate around certain fixed points?} {\bf Ann. Sci. Éc. Norm. Supér. 41} (2008), 141–176.
\bibitem{Lehrer} {\sc Lehrer, E.}
{\em Topological mixing and uniquely ergodic systems.}
{\bf Israel J. Math. 57} (1987), 239--255.
\bibitem{Moise} {\sc Moise, E. E.}
{\em Geometric topology in dimensions 2 and 3.}
Graduate Texts in Mathematics, Vol. 47. Springer-Verlag, New York-Heidelberg, 1977. x+262 pp.
\bibitem{Ostrovski} {\sc Ostrovski, G.}
{\em Fixed point theorem for non-self maps of regions in the plane.}
{\bf Topology Appl. 160} (2013), 915--923
\bibitem{Schweitzer} {\sc Schweitzer, P. A.} {\it Counterexamples to the Seifert conjecture and opening closed leaves of foliations.}
{\bf Ann. of Math. (2) 100} (1974), 386--400.
\bibitem{Slaminka} {\sc Slaminka, E. E.}{\em 
A Brouwer translation theorem for free homeomorphisms.}
{\bf Trans. Amer. Math. Soc. 306} (1988), 277--291. 
\bibitem{Smale}{\sc Smale, S.} {\em Differentiable dynamical systems.} {\bf Bull. of the AMS, 73} (1967), 747--817.
\end{thebibliography}

\end{document}